\def\ps@pprintTitle{%
 \let\@oddhead\@empty
 \let\@evenhead\@empty
 \def\@oddfoot{\centerline{\thepage}}%
 \let\@evenfoot\@oddfoot}
\journal{}
\definecolor{red}{rgb}{0.9,0,0}
\definecolor{green}{rgb}{0.2,0.6,0.3}
\definecolor{blue}{rgb}{0.,0.3,1}
\definecolor{brown}{rgb}{0.47,0.27,0.14}
\definecolor{plum}{rgb}{0.45,0,.66}
\definecolor{mythistle}{rgb}{.99,.195,.133}
\definecolor{myred}{cmyk}{0.000000,1.000000,1.000000,0.1}
\definecolor{myblue}{cmyk}{1.000000,0.750000,0.000000,0.1}
\definecolor{mybgn}{cmyk}{0.850000,0.350000,0.000000,0.1}
\definecolor{mygrn}{cmyk}{0.750000,0.000000,1.000000,0.2}
\newcommand{\bl}{\begin{itemize}}
\newcommand{\el}{\end{itemize}}
\newcommand{\be}{\begin{enumerate}}
\newcommand{\ee}{\end{enumerate}}
\newcommand{\bea}{\begin{eqnarray*}}
\newcommand{\eea}{\end{eqnarray*}}
\newcommand{\beq}{\begin{equation}}
\newcommand{\eeq}{\end{equation}}
\newcommand{\bmx}{\left[ \begin{array}}
\newcommand{\emx}{\end{array} \right]}
\def\bs#1{\boldsymbol{#1}}
\newtheorem{lemma}{Lemma}
\newcommand{\achange}{}
\newcommand{\cchange}{}
\begin{document}

\begin{frontmatter}

\title{A Probabilistic Framework and a Homotopy Method for Real-time Hierarchical Freight Dispatch Decisions} 

%

\author{Roozbeh Yousefzadeh*}
\cortext[mycorrespondingauthor]{Corresponding author}
\address{Computer Science Department \\
Yale University, New Haven, CT, USA \\
roozbeh.yousefzadeh@yale.edu}

\author{Dianne P. O'Leary} 
\address{Computer Science Department and Institute for Advanced Computer Studies \\
University of Maryland, College Park, MD, USA \\
oleary@cs.umd.edu}




\begin{abstract}
We propose a real-time decision framework for multimodal freight dispatch through a system of hierarchical hubs, using a probabilistic model for transit times. Instead of assigning a fixed time to each transit, we advocate using historical records to identify characteristics of the probability density function for each transit time. We formulate a nonlinear optimization problem that defines dispatch decisions that minimize expected cost, using this probabilistic information. Finally, we propose an effective homotopy algorithm that (empirically) outperforms standard optimization algorithms on this problem by taking advantage of its structure, and we demonstrate its effectiveness on numerical examples.
\end{abstract}

\begin{keyword}
Probabilistic transfer times, Multimodal freight transport, Real-time decisions, Homotopy optimization, Hierarchical systems
\MSC[2010] 90B06 \sep 65H20
\end{keyword}

\end{frontmatter}


\section{Introduction} \label{intro}

In operating a multi-level freight transportation system, the objective is to make dispatch decisions that \achange{minimize the total cost in the system}. Given a fleet of vehicles, freight locations, transit times (including loading/unloading), rewards for on-time delivery, and penalties for late delivery, we decide dispatch times for the freight \achange{vehicles in real-time}. 

\achange{
The problem of deciding dispatch times in a transportation system can be studied in two contexts. The first context is the \textit{planning}, where a pre-planned schedule is decided for the system. The second context is to continuously adjust the dispatch decisions based on \cchange{the} \textit{real-time} state of the system. We study the second context.}

Several previous studies have investigated dispatch decisions in transportation operations as reviewed by \cite{steadieseifi2014multimodal}. Most of the literature assumes fixed known (deterministic) transit times \achange{in freight systems, for both the planning and real-time decisions,} but clearly this is not a realistic assumption. Examination of recent records for any system will typically reveal variations in transit time. Some of these variations follow regular patterns depending, for example, on time of day or season of the year. Other variations are less predictable, due, perhaps, to unusual traffic, weather, or staffing levels. \achange{In general, it is reasonable to assume that the completion time of activities in a system can be quite different than what has been envisioned during the planning.}

\achange{
Furthermore, in a complex network, many of the activities are intertwined}. \cchange{Even when the pre-planned schedule is  based on probabilistic transfer times, the schedule can easily become suboptimal or even useless due to small changes in the completion} \achange{time of just a few activities. Therefore, it is important to continuously adjust the dispatch decisions, according to the situation in real-time. }

\cchange{
\cite{sun2016holding}} \achange{proposed a vehicle holding method for mitigating the effect of service disruptions on coordinated intermodal freight operation. They consider probabilistic transfer times, but their focus is on the impact of correlations among vehicle arrivals to a hub and their system is not hierarchical. They show that the expected value of total cost at a hub is not affected by correlated arrivals.

\cchange{A few} other studies that incorporate probabilistic completion time of activities in freight systems have considered simple systems, not hierarchical ones. For example, \cite{li2016real} studied real-time schedule recovery policies for an individual ship with probabilistic transfer times. However, their approach is more focused on generating policies, and their network is simple, representing the route that a ship follows from one hub to another. Other studies, such as \cite{bock2010real}, consider intermodal networks with multiple hubs, but do not consider probabilistic transfer times.

There are also studies that consider probabilistic transfer times in public passenger systems, in order to adjust the dispatch times in real-time, for example, \cite{berrebi2015real}, \cite{sanchez2016real}, \cite{wu2016designing}, and \cite{wu2019stochastic}. However, the differences between a public passenger system and a freight system are significant i.e., transportation mode, size and hierarchy of network, objective and optimization method. 
}

Our work is motivated by the existence of networks of shipping hubs. \achange{ Such networks may be operated by shipping companies such as UPS and FedEx Express \citep{bowen2012spatial}, global cargo companies \citep{rodrigue2016geography,lorange2005shipping} such as Maersk \citep{fremont2007global}, or domestic freight companies such as Amazon and USPS \citep{dobbins2007overview}. All these networks are hierarchical and their complexities would not be accurately modeled by considering individual hubs.} 

The economic cost of shipping delays to the shipper is well accepted \citep{gong2012assessing}, so our goal is to provide a model and a solution algorithm that can model shipping costs better than deterministic models\achange{ in the context of a hierarchical system.}

We consider a freight transportation system that is multimodal and consists of transfer terminals (hubs) and links connecting the origins to hubs and eventually to destinations. The activities relevant to the operation of such systems consist of transporting the cargo among the hubs and sorting/re-distributing the cargo within the hubs.
There are four main innovations in our work:
\begin{enumerate}
    \item We propose using a probabilistic model for transit times \achange{in a hierarchical system}. Instead of assigning a fixed time to each transit, we advocate using historical records to identify characteristics of the probability density function for each transit time. (Section \ref{sect-model})
    \item \achange{We advocate for a realistic hierarchical model that incorporates the complexity of interrelated activities in multimodal freight systems. Probabilistic transfer times has not been considered for such networks. (Section \ref{sect-model})}
    \item We formulate an optimization problem that determines dispatch decisions that minimize expected cost, using this probabilistic information . \achange{Our formulation is efficient and parallelizable so that it can be solved for real-time purposes.} (Section \ref{sect-problem})
    \item Finally, we propose an effective homotopy algorithm that (empirically) outperforms standard optimization algorithms on this problem by taking advantage of its structure. (Section \ref{sect-alg})
\end{enumerate}
In Section \ref{sect-extention}, we show how our work applies to bidirectional networks and to networks with mixed hierarchies. Section \ref{sect-results} contains illustrative numerical results obtained from the model and the homotopy algorithm. We investigate the effectiveness of the model, measure the performance of algorithm and compare it to well-known global optimization algorithms.

\section{Our probabilistic model} \label{sect-model}

In this section, we consider a unidirectional network, as shown in the example in Figure \ref{fig:network}, and later in Section \ref{sect-extention} we investigate bidirectional systems. In Figure \ref{fig:network}, there are two levels of hubs inter-connected with routes; there are no limits on the number of spokes into or out of a hub. The cargo flow is consistent with the direction of arrows in the figure, from in-bound routes to the $1^{st}$ level hub, then to the $2^{nd}$ level, and eventually to delivery routes. Each of the routes has a pre-planned schedule and headway. The model presented in this paper is based on a network with two levels of hubs but the formulation can be extended to consider more levels. The problem is defined as the situation where disruptions have occurred in the system and consequently, the schedule of dispatches in the system should be re-optimized based on the real-time information about operations in the system.

\begin{figure}[h!]
\centering
\includegraphics[width=0.75\textwidth]{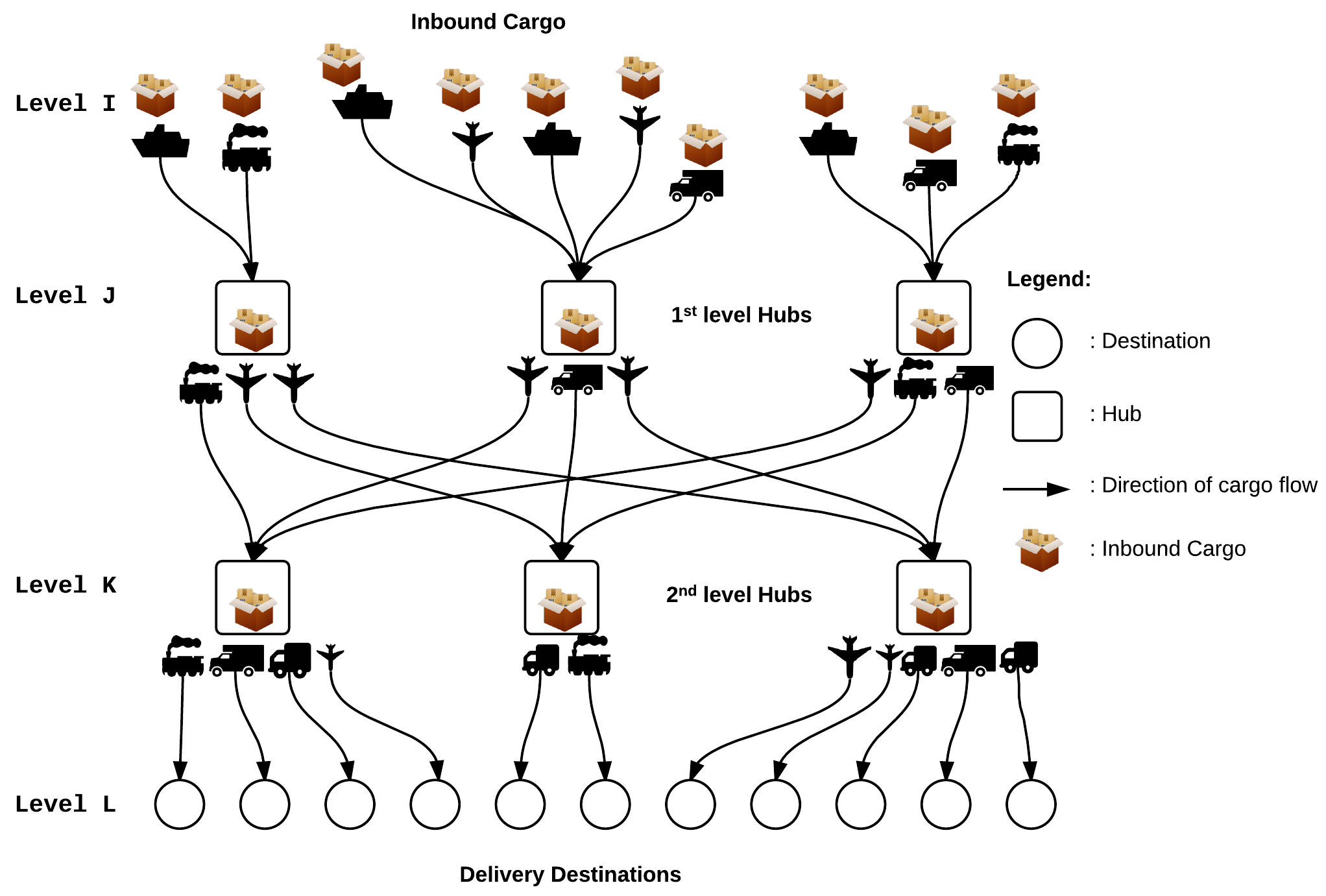}
\caption{Example of a hub-and-spoke freight transportation system with two levels of hubs} 
\label{fig:network}
\end{figure}

It is assumed that at each of the 1$^{st}$ level hubs on level $J$, there are a number of inbound vehicles which have not yet arrived, and there is a vehicle on each outbound route with some cargo already on-board waiting to be dispatched to level $K$. Each of the hubs on level $K$ has a number of delivery routes, and each of the routes has a preplanned schedule for dispatch. Also, there is a scheduled delivery time at the end of each delivery route on level $L$. Another factor taken into account is the travel time and cost between the hubs. This activity consists of the actual transfer between the levels and also the sorting/distribution within the hubs.

Different reliability measures can be defined for a freight transportation system, such as the total amount of cargo that is delivered on-time and the average delay in the system per unit of cargo. All these measures are considered intermediate measures; the ultimate measure is the measure of total cost. The cost function includes operational costs in the system such as transfer cost, waiting cost, storage cost and also penalties for late deliveries. 

We assume that each of the transfers in the system requires a time defined by a probability distribution. In our examples, we use the \achange{truncated} normal distribution, which is one of the most widely used density functions for travel time \citep{aron2014estimating,guo2010multistate}, but our methods are applicable to other probability distributions. The mean and standard deviation of each transfer time can be determined based on recorded data (knowledge base) and real-time parameters such as weather, time of day, traffic congestion and runway queue. Alternatively, a discrete probability model can be used: if we have a record of 20 instances of a given transfer, for example, we can assign a probability of $1/20$ to each of them. 

Each piece of cargo has a scheduled deadline for on-time delivery at its destination, and the dispatch times specified along its route determine the amount of time available for each transfer. Given a set of dispatch times, we can thus compute the probability that the piece of cargo is delivered on time, as well as the expected value of the cost of its delivery (including late penalties).

Clearly, \achange{for every vehicle in the system,} there is a trade-off between waiting for late inbound cargo and immediately dispatching cargo already on-board: as dispatch of a vehicle is delayed, the probability of taking more cargo from the late inbound vehicles increases while the probability of performing the downstream transfers on-time may decrease because \achange{the available time to perform} those transfers may be reduced by waiting.

\section{Our optimization problem} \label{sect-problem}

The main assumptions in our model are these:
\begin{enumerate}
    \item The activities in the system are independent. This independence means that when an activity (e.g. a travel between two hubs) starts, the probability of its completion within a timeframe (e.g. arriving at the next hub) only depends on its own probabilistic parameters. (Even so, activities are entirely interdependent. For example, when dispatch is delayed at a point in system, the probability of success for prior and subsequent activities will increase and decrease, respectively, which is verifiable from the gradient of the objective function.)
    \item \cchange{There are no capacity constraints} 
    for vehicles or for storage of cargo left behind at the hubs.\footnote{\achange{ This is a simplifying assumption, but not unrealistic for large systems. In most of the freight industry, vehicles do not operate near \cchange{capacity;} for examples, ships, trains, planes, trailers \cchange{typically} have load factors around 60 \cchange{or} 70\%\citep{janic2007modelling,mckinnon2009measurement,zhang2018airline}. Hence, the cargo left behind can usually be transported on the next headway with the same unit cost of transport. Any additional cost associated with missing a connection can be incorporated in the storage costs. Capacity of storage facilities is also usually ample in cargo hubs. Considering capacity constraints \cchange{would be a useful extension of our work.} }}
    \item The type of cargo is generic. However, a multi-commodity system can easily be defined and solved as the summation of the same formulation for each commodity. Each part of the summation can then be performed individually and in parallel with others. 
    \item The system can be multimodal or unimodal. 
    \item There are at most two levels of hubs and there are no loops in the network.
    \item Transfer costs among hubs are stationary over time.
    \item The probabilistic characteristics of transfer times (e.g. mean and standard deviation) include the time for transfer and sorting activities within the hubs. 
    \item Late deliveries incur a quantifiable penalty (a fine, loss of future business, etc.). Generally, the penalty increases with the magnitude of lateness, linearly or nonlinearly, but it may become constant after some time, equal to the penalty for delivery on the next headway.
\end{enumerate}

\subsection{Notation}
 Vectors and scalars are in lower case and matrices are in upper case. Bold characters are used for vectors and matrices, and the relevant level on the network is shown as a superscript in parenthesis. A parameter can form a matrix on one level of the network while it forms a vector on another level. We make use of the Hadamard product, a parallelizable operation that makes the computations efficient.

\noindent
$* :$ matrix product \\
$\odot :$ Hadamard product \\
$\bigvee :$ component-wise Max operator \\
$vec() :$ vectorization operator that yields a row vector \\
$\nabla(f,x) :$ Gradient of function $f$ with respect to vector $x$\\
$i \in I:$ index for inbound routes , $\quad n_I = |I|:$ number of inbound routes \\
$j \in J:$ index for upstream hubs , $\quad n_J = |J|:$ number of upstream hubs \\
$k \in K:$ index for downstream hubs , $\quad n_K = |K|:$ number of downstream hubs \\
$l \in L:$ index for delivery routes , $\quad n_L = |L|:$ number of delivery routes \\
$\kappa :$ total cost $(\$)$ \\
$\mu, \sigma :$ mean and standard deviation of a distribution $(hrs)$ \\
$\pi^l :$ penalty function for late delivery $(\$/lb)$ \\
$\pi^m :$ penalty for missing a connection $(\$/lb)$ \\
$\pi^s :$ unit cost of storage for left behind cargo $(\$/lb)$ \\
$\pi^w :$ unit cost of dispatching delay $(\$/lb)$ \\
$c^f :$ transfer cost between two hubs or hubs and destinations $(\$)$ \\
$c^l :$ penalty cost for late delivery $(\$)$ \\
$f(t|\mu,\sigma) :$ probability density function with mean $\mu$ and standard deviation $\sigma$ \\
$\boldsymbol{\O}_{m,n}:$ $m \times n$ matrix of zeros \\
$\boldsymbol{J}_{m,n} :$ $m \times n$ matrix of ones \\
$m :$ network connectivity between two levels (binary)\footnote{\achange{ It defines which nodes in the system are connected to each other, since a network can have missing links between some of its hubs.} }\\
$P(X \leq Y):$ probability of random variable $X$ being less than $Y$ \\
$p :$ probability of successful transfer of cargo between two levels  $\in (0,1)$ \\
$t^p :$ pre-scheduled time of dispatches $(hr)$ \\
$t^s :$ scheduled delivery time on level $L$ $(hr)$ \\
$w^m :$ weight of each cargo element on level I destined to level L $(lb)$ \\
$w^p :$ weight of cargo present at hubs ready to be dispatched  $(lb)$ \\
$t^d :$ actual time of vehicle dispatch from hubs $(hr)$

\subsection{Formulae}
The \achange{decision} variables in our problem are the dispatch times $\boldsymbol{t}^d$. Time is measured forward from the present time, $t = 0$, so the variables are constrained to be non-negative. 

Our objective is to choose dispatch times to minimize the total cost of the freight deliveries. We partition the cost into four terms, highlighted in color in our objective function.
\begin{enumerate}
	\item The penalty for cargo that is delivered, but later than scheduled. (\textcolor{brown}{brown})
	\item The costs associated with cargo that is not delivered because it missed a transfer, including storage costs and penalty for late delivery. (\textcolor{blue}{blue})
    \item The cost associated with delaying a dispatch past its preplanned schedule. It might, for example, include overtime pay for the staff. (\textcolor{red}{red})
    \item The costs of transfers of cargo within the system. (\textcolor{green}{green})
\end{enumerate}
The \textcolor{brown}{first} and \textcolor{blue}{second} terms are costs associated with individual pieces of cargo, while the \textcolor{red}{third} and \textcolor{green}{fourth} terms are costs associated with dispatch of vehicles. We first consider the \textcolor{brown}{first} and \textcolor{blue}{second} terms.

All the costs that cargo  can incur are weight dependent; therefore, we characterize cargo by its weight. As shown in Figure \ref{fig:network}, there are three categories of cargo in the system: 
             inbound cargo $\boldsymbol{W}_{n_I,n_L}^{m,(I \rightarrow L)}$ from level $I$,    
             cargo $\boldsymbol{W}_{n_J,n_L}^{p,(J \rightarrow L)}$ initially present at the hubs on level $J$, 
             and cargo $\boldsymbol{w}_{1,n_L}^{p,(K \rightarrow L)}$ initially present at the hubs on level $K$.
Three scenarios are possible:
\begin{enumerate}
    \item Cargo successfully makes all the connecting hub transfers and arrives on time at the destination. In this ideal scenario, the only cost is the transfer cost.
    \item Cargo successfully makes all the connecting hub transfers but arrives at level $L$ later than scheduled. Added to the transfer cost is a late delivery penalty, determined by a nonlinear penalty function $\pi^I$, monotonically non-decreasing with respect to delay.
    \item Cargo misses a connecting hub transfer on level $J$ or $K$ and is left behind. In this worst case, cargo is stored at the hub until the next dispatch on the same route. The storage cost and late delivery penalty is added to the transfer cost. The penalty $\pi^l$ varies based on the destination on level $L$ but is bounded above by $\pi^m$, the penalty for delivery on the next headway.
\end{enumerate}
In order to calculate the total cost, we first define the probabilities of successful transfers. The probability that cargo on level $I$ arrives in time on level $J$ to successfully make the hub transfer towards level $K$ is
\begin{equation} \label{eq-p1}
\begin{split}
\boldsymbol{P}_{n_I,n_K}^{(I \rightarrow J)} \big( \boldsymbol{T}_{n_J,n_K}^{d,(J \rightarrow K)} \big) 
 &= \mathlarger{P} \big( \boldsymbol{X} \leq \boldsymbol{M}_{n_I,n_J}^{(I \rightarrow J)} * \boldsymbol{T}_{n_J,n_K}^{d,(J \rightarrow K)} \big)  \\
 &= \mathlarger{\int}_{\boldsymbol{\O}_{n_I,n_K}}^{\bs{M}_{n_I,n_J}^{(I \rightarrow J)} * \boldsymbol{T}_{n_J,n_K}^{d,(J \rightarrow K)}}  f \big( t|\{\boldsymbol{\mu}_{1,n_I}^{(I \rightarrow J)}, \boldsymbol{\sigma}_{1,n_I}^{(I \rightarrow J)} \}^T * \boldsymbol{J}_{1,n_K} \big) {\mathrm d} t \, .
\end{split}
\end{equation}
\achange{In this equation, $\mathlarger{P}$ is the probability function that computes whether each of the transfers from level $I$ to $J$ will be completed before the vehicles on level $J$ are dispatched at $\boldsymbol{T}_{n_J,n_K}^{d,(J \rightarrow K)}$. $\bs{X}$ is used as a random variable symbol representing the completion time of transfers from level $I$ to $J$. To elaborate further on this equation, consider any piece of cargo dispatched from level $I$. We want to compute the probability that the piece of cargo makes its way to the vehicle on level $J$ \cchange{before that vehicle is dispatched.}
The transfer from level $I$ to $J$ including the inter-hub operations has mean $\bs{\mu}_{1,n_I}^{(I \rightarrow J)}$ and standard deviation $\bs{\sigma}_{1,n_I}^{(I \rightarrow J)} $. The probability of cargo reaching the vehicle on level $J$ anytime before its dispatch (limits of the integral) is the probability of successful transfer.}

Similarly,  the probability that cargo on level $J$ arrives in time on level $K$ to successfully make the hub transfer towards level $L$ is
\begin{equation} \label{eq-p2}
\begin{split}
\boldsymbol{P}&_{n_J,n_L}^{(J \rightarrow K)} \big( \boldsymbol{T}_{n_J,n_K}^{d,(J \rightarrow K)}, \boldsymbol{t}_{1,n_L}^{d,(K \rightarrow L)} \big) \\
&= \mathlarger{P} \big( \boldsymbol{X} \leq \boldsymbol{J}_{n_J,1} * \boldsymbol{t}_{1,n_L}^{d,(K \rightarrow L)} - \boldsymbol{T}_{n_J,n_K}^{d,(J \rightarrow K)} * \boldsymbol{M}_{n_K,n_L}^{(K \rightarrow L)} \big) \\
&= \mathlarger{\int}_{\boldsymbol{\O}_{n_J,n_L}}^{ \boldsymbol{J}_{n_J,1} * \boldsymbol{t}_{1,n_L}^{d,(K \rightarrow L)} - \boldsymbol{T}_{n_J,n_K}^{d,(J \rightarrow K)} * \boldsymbol{M}_{n_K,n_L}^{(K \rightarrow L)} }  f \big( t|\{\boldsymbol{\mu}_{n_J,n_K}^{(I \rightarrow J)}, \boldsymbol{\sigma}_{n_J,n_K}^{(I \rightarrow J)} \} * \boldsymbol{M}_{n_K,n_L}^{(K \rightarrow L)} \big) {\mathrm d} t \, .
\end{split}
\end{equation}
 To see how expected cost is calculated from these probabilities, consider one element of cargo $w_{i,l}^{m,(I \rightarrow L)}$ that comes from node $i$ on level $I$,  destined to go through hubs $j$ and $k$ on levels $J$ and $K$ to ultimately get delivered at node $l$ on level $L$. 
 With probability $p_{i,k}^{(I \rightarrow J)}p_{j,l}^{(J \rightarrow K)}$ it makes all connections, moving from level $I$ to level $K$.  
 It misses its hub transfer on level $J$ with probability $1 - p_{i,k}^{(I \rightarrow J)}$ and incurs storage cost and late delivery penalty. 
 It  gets stuck on level $K$ and incurs the late penalty and storage cost with probability $p_{i,k}^{(I \rightarrow J)}(1 - p_{j,l}^{(J \rightarrow K)})$. These probabilities sum to 1, so by multiplying them by the corresponding costs we obtain the expected value of cargo costs. We include the expected value of the penalty for late delivery at level $L$ based on the probability of late arrival there:
\begin{equation} \label{eq-p3}
\begin{split}
 \bs{c}_{1,n_L}^{l,(K \rightarrow L)} \big( &\bs{t}_{1,n_L}^{d,(K \rightarrow L)} \big) \\
 &= \mathlarger{\int}_{\bs{t}_{1,n_L}^s - \bs{t}_{1,n_L}^{d,(K \rightarrow L)} }^{+\bs{\infty}_{1,n_L}} \pi^l \big( t - \bs{t}_{1,n_L}^s + \bs{t}_{1,n_L}^{d,(K \rightarrow L)} \big) \: . \: f \big( t|\{\bs{\mu}_{1,n_L}^{(I \rightarrow J)}, \bs{\sigma}_{1,n_L}^{(I \rightarrow J)} \} \big) {\mathrm d} t \, .
\end{split}
\end{equation}
Clearly, $\bs{c}_{1,n_L}^{l,(K \rightarrow L)}$ is strictly positive in scenario 2 and is zero in scenario 1. The total cost associated with $w_{i,l}^{m,(I \rightarrow L)}$ is thus $w_{i,l}^{m,(I \rightarrow L)}$  times
\begin{align*}
 \textcolor{blue}{(1 - p_{i,k}^{(I \rightarrow J)}) (\pi_{1,l}^{m,(J \rightarrow K)}} & \textcolor{blue}{+ \pi_{1,j}^{s,(J \rightarrow K)}) + p_{i,k}^{(I \rightarrow J)}(1 - p_{j,l}^{(J \rightarrow K)} ) (\pi_{1,l}^{m,(K \rightarrow L)} + \pi_{1,k}^{s,(K \rightarrow L)}) } \\
& \textcolor{brown}{+ p_{i,k}^{(I \rightarrow J)} p_{j,l}^{(J \rightarrow K)} c_{1,l}^{l,(K \rightarrow L)}}.
\end{align*}

In order to sum the costs for all the cargo elements we use Hadamard product notation to denote element-by-element multiplication of the product of probability matrices with the connectivity matrices $\bs{M}_{n_I,n_J}^{(I \rightarrow J)}$ and $\bs{M}_{n_K,n_L}^{(K \rightarrow L)}$, which are binary and define how the nodes are connected in the network.

In a similar way, we sum up the costs for delayed dispatch of a vehicle and for transfers of cargo. This gives us a total expected cost to minimize subject to non-negativity constraints on dispatch times:
\begin{equation} \label{eq-obj}
\begin{split} 
&\min\limits_{\bs{T}_{n_J,n_K}^{d,(J \rightarrow K)}, \bs{t}_{1,n_L}^{d,(K \rightarrow L)} } \kappa = \\
& \textcolor{brown}{ \quad \Big[ \; \bs{J}_{1,n_J} * \Big( \big( \boldsymbol{P}_{n_I,n_K}^{(I \rightarrow J)} * \boldsymbol{M}_{n_K,n_L}^{(K \rightarrow L)} \big) \odot \big( \boldsymbol{M}_{n_I,n_J}^{(I \rightarrow J)} * \boldsymbol{P}_{n_J,n_L}^{(J \rightarrow K)} \big) \odot \boldsymbol{W}_{n_I,n_L}^{m,(I \rightarrow L)} \Big) * \boldsymbol{c}_{1,n_L}^{l,(K \rightarrow L)^T} } \\
&\textcolor{brown}{ \quad + \; \boldsymbol{J}_{1,n_J} * \big( \boldsymbol{P}_{n_J,n_L}^{(J \rightarrow K)} \odot \boldsymbol{W}_{n_J,n_L}^{p,(J \rightarrow L)} \big) * \boldsymbol{c}_{1,n_L}^{l,(K \rightarrow L)^T} + \boldsymbol{w}_{1,n_L}^{p,(K \rightarrow L)} * \boldsymbol{c}_{1,n_L}^{l,(K \rightarrow L)^T} \Big]} \\
&+ \textcolor{blue}{ \bigg[ \big( \boldsymbol{\pi}_{1,n_J}^{s,(J \rightarrow K)} * \boldsymbol{M}_{n_I,n_J}^{(I \rightarrow J)^T} \big) * \Big( \boldsymbol{J}_{n_I,n_L} - \big( \boldsymbol{P}_{n_I,n_K}^{(I \rightarrow J)} * \boldsymbol{M}_{n_K,n_L}^{(K \rightarrow L)} \big) \Big) } \\
& \textcolor{blue}{ \quad \odot \; \boldsymbol{W}_{n_I,n_L}^{m,(I \rightarrow L)} \bigg) * \boldsymbol{\pi}_{1,n_L}^{m,(J \rightarrow K)^T} + \bigg( \boldsymbol{J}_{1,n_I} * \Big( \big( \boldsymbol{P}_{n_I,n_K}^{(I \rightarrow J)} * \boldsymbol{M}_{n_K,n_L}^{(K \rightarrow L)} \big) } \\
& \textcolor{blue}{ \quad \odot \; \big( \boldsymbol{J}_{n_I,n_L} - \boldsymbol{M}_{n_I,n_J}^{(I \rightarrow J)} * \boldsymbol{P}_{n_J,n_L}^{(J \rightarrow K)} \big) \odot \boldsymbol{W}_{n_I,n_L}^{m,(I \rightarrow L)} \Big) + \; \boldsymbol{J}_{1,n_J} } \\
&\textcolor{blue}{\quad * \Big( \big( \boldsymbol{J}_{n_J,n_L} - \boldsymbol{P}_{n_J,n_L}^{(J \rightarrow K)} \big) \odot \boldsymbol{W}_{n_J,n_L}^{p,(J \rightarrow L)} \Big) \bigg) * \big( \boldsymbol{\pi}_{1,n_K}^{s,(K \rightarrow L)} * \boldsymbol{M}_{n_K,n_L}^{(K \rightarrow L)} + \boldsymbol{\pi}_{1,n_L}^{m,(K \rightarrow L)} \big)^T \bigg] } \\
&+ \textcolor{red}{ \Big[ \; \bs{J}_{1,n_J} * \Big( \bigvee \big( \bs{T}_{n_J,n_K}^{d,(J \rightarrow K)} - \bs{T}_{n_J,n_K}^{p,(J \rightarrow K)},\textbf{\O}_{n_J,n_K} \big) \odot \bs{\Pi}_{n_J,n_K}^{w,(J \rightarrow K)} \Big) * \bs{J}_{n_K,1} }\\
&\textcolor{red}{ \quad + \; \bs{\pi}_{1,n_L}^{w,(K \rightarrow L)} * \bigvee \big( \bs{t}_{1,n_L}^{d,(K \rightarrow L)} - \bs{t}_{1,n_L}^{p,(K \rightarrow L)}, \textbf{\O}_{1,n_L} \big)^T \Big] } \\
&+ \textcolor{green}{ \Big[ \; \bs{J}_{1,n_J} * \bs{C}_{n_J,n_K}^{f,(J \rightarrow K)} * \bs{J}_{n_K,1} + \bs{c}_{1,n_L}^{f,(K \rightarrow L)} * \bs{J}_{n_L,1} \Big]} ,
\end{split}
\end{equation}
subject to: 
\begin{equation} \label{eq-constraint}
    \bs{T}_{n_J,n_K}^{d,(J \rightarrow K)} \: , \: \bs{t}_{1,n_L}^{d,(K \rightarrow L)} \geq 0 \, .
\end{equation}
Brackets in equation (\ref{eq-obj}) distinguish each of the four terms in the objective function. The third \textcolor{red}{red} term  is the cost associated with delaying the dispatches which increases with delay in dispatch. $\boldsymbol{T}_{n_J,n_K}^{d,(J \rightarrow K)}$ is the time that each vehicle on level $J$ is dispatched towards level $K$ and $\boldsymbol{T}_{n_J,n_K}^{p,(J \rightarrow K)}$ is the pre-planned dispatch schedule for each of those vehicles. $\boldsymbol{\Pi}_{n_J,n_K}^{w,(J \rightarrow K)}$ is the cost ($\$$ per hour of delay) for delaying the dispatch for each of the vehicles on level $J$. The max operator is used to ensure that dispatching ahead of schedule is not rewarded. There is only a single point of non-differentiability for each element of $\boldsymbol{t}^d$, and we can rely on the subgradient for these points.

The last \textcolor{green}{green} term accounts for costs of transferring the cargo within the system to the destinations. The $\boldsymbol{c}^f$ can be a constant cost for each of the transfers in the system, or it can be defined as a function of time and other parameters. It would be practical to consider $\boldsymbol{c}^f$ as a function that depends on how fast a transfer between two nodes is performed. In other words, the transfer cost can be interdependent with characteristics of probability distribution for the corresponding transfer time. In such a setting, transfers in the network can be accelerated with increased $\boldsymbol{c}^f$.

\section{Our homotopy optimization algorithm} \label{sect-alg}

\achange{ The optimization problem defined in previous section can be solved with off-the-shelf nonlinear non-convex optimization solvers that can handle linear constraints. 
Since, our problem has to be solved in real-time, both the quality of the solution and the time it takes to find it are essential.
Here, we design a homotopy algorithm to solve our problem and later compare the \cchange{quality of solutions obtained by it \cchange{to solutions from} off-the-shelf solvers, equalizing computation cost}. We note that designing \cchange{customized} nonlinear optimization algorithms is not common in \cchange{the} freight transportation literature.

To design the homotopic optimization algorithm, we first \cchange{identify some useful properties of the objective function.} }

For convenience, let  $\boldsymbol{t}^V$ denote the vector of variables, with 
$vec( \boldsymbol{T}_{n_J,n_K}^{d,(J \rightarrow K)})$ 
ordered first, followed by $\boldsymbol{t}_{1,n_L}^{d,(K \rightarrow L)}$.
The examples in section \ref{sect-results} demonstrate that the objective function for our problem is non-convex.
Since the objective function is twice differentiable almost everywhere, and since the constraints are just non-negativity, a gradient projection algorithm is a good candidate for a base algorithm, but we enhance it in order to try to find a local minimum close to the global one.
We express our objective function as
\begin{equation} \label{eq-alg1}
    \kappa (\boldsymbol{t}^V) = g(\boldsymbol{t}^V) + h(\boldsymbol{t}^V),
\end{equation}
denoting the sum of the \textcolor{brown}{first} and \textcolor{red}{third} terms in (\ref{eq-obj})
 by $g(\boldsymbol{t}^V)$
and the sum of the \textcolor{blue}{second} and \textcolor{green}{fourth} terms by $h(\boldsymbol{t}^V)$. This partitioning is motivated by the following observations.

\begin{lemma} \label{lemma1}
$g(\boldsymbol{t}^V)$ is monotonically non-decreasing and unbounded with respect to $\boldsymbol{t}_{1,n_L}^{d,(K \rightarrow L)}$. $h(\boldsymbol{t}^V)$ is non-negative and monotonically non-increasing with respect to $\boldsymbol{t}_{1,n_L}^{d,(K \rightarrow L)}$. If the derivatives of the probability density functions converge to zero as $t \rightarrow \infty$, or if the density functions have finite support, then the gradient $ \nabla(h,\boldsymbol{t}_{1,n_L}^{d,(K \rightarrow L)})$ converges to zero.
\end{lemma}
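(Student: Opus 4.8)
The plan is to read off, term by term, how the objective (\ref{eq-obj}) depends on the level-$L$ dispatch times $\bs{t}_{1,n_L}^{d,(K \rightarrow L)}$, after first recording two elementary monotonicity facts about the building blocks (\ref{eq-p2}) and (\ref{eq-p3}). \emph{Fact (i):} in (\ref{eq-p2}) each entry of $\bs{P}_{n_J,n_L}^{(J \rightarrow K)}$ is a cumulative distribution function evaluated at an upper limit $U_{jl}=t^{d}_{l}-(\bs{T}_{n_J,n_K}^{d,(J \rightarrow K)}*\bs{M}_{n_K,n_L}^{(K \rightarrow L)})_{jl}$ that is affine and strictly increasing in $t^d_l$; hence $\bs{P}_{n_J,n_L}^{(J \rightarrow K)}$ is entrywise non-decreasing in $\bs{t}_{1,n_L}^{d,(K \rightarrow L)}$, with $\partial P^{(J\rightarrow K)}_{jl}/\partial t^d_l$ equal (by the Leibniz rule) to the corresponding density $f_{jl}$ evaluated at $U_{jl}$. \emph{Fact (ii):} in (\ref{eq-p3}), increasing $\bs{t}_{1,n_L}^{d,(K \rightarrow L)}$ simultaneously lowers the lower limit of integration and pointwise increases the non-negative, monotonically non-decreasing integrand $\pi^l(\cdot)$, so $\bs{c}_{1,n_L}^{l,(K \rightarrow L)}$ is non-negative and non-decreasing in $\bs{t}_{1,n_L}^{d,(K \rightarrow L)}$; a direct comparison of the two integrals gives this without assuming $\pi^l$ differentiable.

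For $g$ (the \textcolor{brown}{first} plus \textcolor{red}{third} terms): in the brown term the only factors that vary with $\bs{t}_{1,n_L}^{d,(K \rightarrow L)}$ are $\bs{P}_{n_J,n_L}^{(J \rightarrow K)}$ and $\bs{c}_{1,n_L}^{l,(K \rightarrow L)}$, both non-negative and non-decreasing by (i)--(ii), while every remaining factor ($\bs{P}^{(I \rightarrow J)}$, the connectivity matrices, the weight matrices) is a non-negative constant in this variable; a sum of products of non-negative non-decreasing functions is non-decreasing, so the brown term is non-decreasing. In the red term the only $\bs{t}_{1,n_L}^{d,(K \rightarrow L)}$-dependent part reduces to $\sum_{l}\pi^{w}_{l}\max(t^{d}_{l}-t^{p}_{l},0)$, which is piecewise linear, non-decreasing, and (the unit delay costs being positive) tends to $+\infty$ as any component of $\bs{t}_{1,n_L}^{d,(K \rightarrow L)}$ grows. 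Hence $g$ is monotonically non-decreasing and unbounded.

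For $h$ (the \textcolor{blue}{second} plus \textcolor{green}{fourth} terms): non-negativity is immediate since every factor is non-negative --- probabilities lie in $[0,1]$, the complements $\bs{J}-\bs{P}_{n_J,n_L}^{(J \rightarrow K)}$, $\bs{J}-\bs{M}_{n_I,n_J}^{(I \rightarrow J)}*\bs{P}_{n_J,n_L}^{(J \rightarrow K)}$ and $\bs{J}-\bs{P}_{n_I,n_K}^{(I \rightarrow J)}*\bs{M}_{n_K,n_L}^{(K \rightarrow L)}$ are non-negative because the binary connectivity matrices carry at most one unit entry in the relevant row/column (each route has a unique origin hub), and all penalty, weight and transfer-cost data are non-negative. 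The only $\bs{t}_{1,n_L}^{d,(K \rightarrow L)}$-dependence in the blue term is through $\bs{P}_{n_J,n_L}^{(J \rightarrow K)}$, and it enters only inside those complements, which by (i) are non-increasing; the green term is constant in $\bs{t}_{1,n_L}^{d,(K \rightarrow L)}$ (the transfer costs being data). Hence $h$ is monotonically non-increasing. For the gradient: since $h$ depends on $\bs{t}_{1,n_L}^{d,(K \rightarrow L)}$ only through $\bs{P}_{n_J,n_L}^{(J \rightarrow K)}$, and only affinely, the chain rule with Fact (i) gives $\partial h/\partial t^d_l=\sum_{j}a_{jl}\,f_{jl}(U_{jl})$ where each $a_{jl}=\partial h/\partial P^{(J\rightarrow K)}_{jl}$ is a fixed, bounded combination of the data (independent of $\bs{t}_{1,n_L}^{d,(K \rightarrow L)}$) and $U_{jl}\to\infty$ as $t^d_l\to\infty$; under the stated hypothesis (finite support, or vanishing tails of the densities) each $f_{jl}(U_{jl})\to 0$, so every component of $\nabla(h,\bs{t}_{1,n_L}^{d,(K \rightarrow L)})$ vanishes as $\bs{t}_{1,n_L}^{d,(K \rightarrow L)}\to+\infty$ componentwise.

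I expect the gradient step to be the main obstacle: one must justify differentiating the CDF in (\ref{eq-p2}) under the integral sign, confirm that the coefficients $a_{jl}$ are genuinely $\bs{t}_{1,n_L}^{d,(K \rightarrow L)}$-independent and uniformly bounded, and reconcile the two forms of the tail hypothesis (the bare requirement is $f_{jl}(t)\to 0$, which is automatic for finite support and which a vanishing-derivative-of-the-density condition implies for integrable densities). The monotonicity parts are routine once the sign bookkeeping above is carried out; the one pitfall there is to observe that $h$ is genuinely smooth in $\bs{t}_{1,n_L}^{d,(K \rightarrow L)}$ --- the single points of non-differentiability created by the $\bigvee$ operator sit in the \textcolor{red}{red} term, i.e.\ in $g$, not in $h$.
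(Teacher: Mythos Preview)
Your proposal is correct and follows essentially the same term-by-term approach as the paper: identify that the dependence on $\bs{t}_{1,n_L}^{d,(K \rightarrow L)}$ enters only through the CDFs $\bs{P}_{n_J,n_L}^{(J \rightarrow K)}$, the penalty integral $\bs{c}_{1,n_L}^{l,(K \rightarrow L)}$, and the linear waiting-cost in the \textcolor{red}{red} term, and then read off the signs. Two minor differences are worth noting. For non-negativity of $h$, the paper argues indirectly (since $h$ is non-increasing and its limiting value as the probabilities tend to~$1$ is a non-negative combination of cost data, $h$ is everywhere non-negative), whereas you argue directly that each factor is non-negative; your route needs the structural fact about the connectivity matrices that you correctly flag, while the paper's route sidesteps it. For the gradient claim, the paper simply infers $\nabla(h,\bs{t}_{1,n_L}^{d,(K \rightarrow L)})\to 0$ from ``$h$ becomes arbitrarily close to constant,'' which is not a valid implication in general; your chain-rule computation $\partial h/\partial t^d_l=\sum_j a_{jl}\,f_{jl}(U_{jl})$ with $U_{jl}\to\infty$ is the rigorous way to close that gap and is a genuine improvement over the paper's argument.
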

\begin{proof}
The  \textcolor{red}{red} term in $g(\boldsymbol{t}^V)$ increases linearly with $\boldsymbol{t}_{1,n_L}^{d,(K \rightarrow L)}$. For the  \textcolor{brown}{brown} term,  $\boldsymbol{t}_{1,n_L}^{d,(K \rightarrow L)}$ is the upper limit for integration for the nonnegative functions $\boldsymbol{P}_{n_J,n_L}^{(J \rightarrow K)}$ and $\boldsymbol{c}_{1,n_L}^{l,(K \rightarrow L)}$,  so increasing $\boldsymbol{t}_{1,n_L}^{d,(K \rightarrow L)}$ cannot cause the $g(\boldsymbol{t}^V)$ to decrease. This proves the first statement of the lemma.

Note that $h(\boldsymbol{t}^V)$ is a function of $\boldsymbol{t}_{1,n_L}^{d,(K \rightarrow L)}$ through $\boldsymbol{P}_{n_J,n_L}^{(J \rightarrow K)}$. In contrast to $g(\boldsymbol{t}^V)$, all instances of $\boldsymbol{P}_{n_J,n_L}^{(J \rightarrow K)}$ have negative signs here. With same reasoning as for $g(\boldsymbol{t}^V)$, increasing the $\boldsymbol{t}_{1,n_L}^{d,(K \rightarrow L)}$ cannot decrease the $\boldsymbol{P}_{n_J,n_L}^{(J \rightarrow K)}$ and therefore cannot increase $h(\boldsymbol{t}^V)$.

We can make $h(\boldsymbol{t}^V)$ arbitrary close to constant by increasing $\boldsymbol{t}_{1,n_L}^{d,(K \rightarrow L)}$ enough, since  $\boldsymbol{P}_{n_I,n_K}^{(I \rightarrow J)}$ and $\boldsymbol{P}_{n_J,n_L}^{(J \rightarrow K)}$ can be made arbitrarily close to their upper bound,~$1$. 
This means $ \nabla(h,\boldsymbol{t}_{1,n_L}^{d,(K \rightarrow L)} ) \rightarrow 0$. 

Setting $\boldsymbol{P}_{n_I,n_K}^{(I \rightarrow J)} \approx 1$ and $\boldsymbol{P}_{n_J,n_L}^{(J \rightarrow K)} \approx 1$ will make the minimum of $h(\boldsymbol{t}^V)$ a positive function of nonnegative costs and therefore $h(\boldsymbol{t}^V)$ itself is non-negative.
\end{proof}

\achange{
We leverage the result of this lemma to transform our objective function into a state where the global minimizer is known. }Homotopy (continuation) methods, rather than directly dealing with the given optimization problem, create a sequence of related problems, ending with the given problem, for which the global solution to the first problem is known or can be found easily. Then starting from this problem and its known solution, we step our way to the given problem, tracing a path of solutions \citep{dunlavy2005homotopy,nocedal2006numerical}. This optimization method has proven to be effective in many very difficult optimization problems, e.g., \cite{chapelle2006continuation,floudas2014recent,guddat2006modified,nocedal2006numerical,tuy2000monotonic,watson1989modern,wu1996effective,dunlavy2005homotopy,dunlavy2005hope,mobahi2015link,mobahi2015theoretical,anandkumar2017homotopy}.

\achange{
In our homotopy method, we first compute the set of variables ${\bs{t}^V}^0$, that would minimize the $h(\bs{t}^V)$. Then, we transform the $g(\bs{t}^V)$ such that it is minimized at ${\bs{t}^V}^0$, too. Adding the $h(\bs{t}^V)$ to the transformed $g(\bs{t}^V)$ would yield the transformed objective function with global minimizer at ${\bs{t}^V}^0$. Following is the formal procedure that achieves this.
}

Given a parameter $\gamma > 0$, define
\begin{equation} \label{eq-alg6}
\boldsymbol{\delta}_{n_J,n_K}^{(J \rightarrow K)} = \bigg( \bigvee \Big[ \boldsymbol{M}_{n_I,n_J}^{(I \rightarrow J)} \odot \Big( \big( \boldsymbol{\mu}_{1,n_I}^{(I \rightarrow J)} + \gamma \; \boldsymbol{\sigma}_{1,n_I}^{(I \rightarrow J)} \big)^T * \boldsymbol{J}_{1,n_J} \Big) \Big]  \bigg)^T * \boldsymbol{J}_{1,n_K},
\end{equation}
\begin{equation} \label{eq-alg7}
\begin{split}
\boldsymbol{\delta}_{1,n_L}^{(K \rightarrow L)} = \bigvee \Big[ \big( \boldsymbol{\mu}_{1,n_I}^{(I \rightarrow J)} + \gamma \; &\boldsymbol{\sigma}_{1,n_I}^{(I \rightarrow J)} \big)^T * \boldsymbol{J}_{1,n_L} \\
&+ \boldsymbol{M}_{n_I,n_J}^{(I \rightarrow J)} * \big( \boldsymbol{\mu}_{n_J,n_K}^{(J \rightarrow K)} + \gamma \; \boldsymbol{\sigma}_{n_J,n_K}^{(J \rightarrow K)} \big) * \boldsymbol{M}_{n_K,n_L}^{(K \rightarrow L)} \Big] \, .
\end{split}
\end{equation}
Setting $\gamma$ large enough (e.g., $\gamma \approx 5$ for the normal distribution) ensures that at time
\begin{equation} \label{eq-alg8}
    {\boldsymbol{t}^V}^0 = \Big\langle vec \Big( \bigvee \big( \bs{\delta}_{n_J,n_K}^{(J \rightarrow K)} , \bs{T}_{n_J,n_K}^{p,(J \rightarrow K)} \big) \Big), \bigvee \big( \bs{\delta}_{1,n_L}^{(K \rightarrow L)} , \bs{t}_{1,n_L}^{p,(K \rightarrow L)} \big) \Big\rangle,
\end{equation}
$\bs{P}_{n_I,n_K}^{(I \rightarrow J)} \approx 1$ and $\bs{P}_{n_J,n_L}^{(J \rightarrow K)} \approx 1$. So at that time, the function $h({\bs{t}^V}^0)$ will be close to its minimum value. We use this as a starting point for our homotopy algorithm.

\achange{
We now transform the $g(\bs{t}^V)$, such that its value at ${\bs{t}^V}^0$ is close to its minimum value. To achieve this, we define }
\begin{equation} \label{eq-alg9}
\bs{\delta} = - \Big\langle \textbf{\O}_{1, n_J \times n_K} \; , \; \bs{\delta}_{1,n_L}^{(K \rightarrow L)} + \bs{\mu}_{1,n_L}^{(K \rightarrow L)} + \gamma \; \bs{\sigma}_{1,n_L}^{(K \rightarrow L)} - \bs{t}_{1,n_L}^s \Big\rangle,
\end{equation}
\achange{and replace $g(\bs{t}^V)$ with $g(\bs{t}^V + \bs{\delta})$.

To verify that this transformation brings the $g({\bs{t}^V}^0 + \bs{\delta})$ close to its minimum value, consider the third term, $f \big( t|\{\bs{\mu}_{1,n_L}^{(I \rightarrow J)}, \bs{\sigma}_{1,n_L}^{(I \rightarrow J)} \} \big)$, in equation \eqref{eq-p3}. Choosing $\gamma$ large enough ensures that the value of this term is close to zero within the transformed integral limits of equation \eqref{eq-p3}. This leads to $\bs{c}_{1,n_L}^{l,(K \rightarrow L)} \approx 0$, and hence, the first term of $g({\bs{t}^V}^0 + \bs{\delta})$ will be close to its minimal value, zero. The second term of $g({\bs{t}^V}^0 + \bs{\delta})$ is minimized to zero, by construction of ${\bs{t}^V}^0$ in equation \eqref{eq-alg8}.

The transformed objective function is the sum of two components, $h(\bs{t}^V)$ and $g(\bs{t}^V + \bs{\delta})$. Since ${\bs{t}^V}^0$ is the global minimizer of both components, it is also the global minimizer of the transformed objective function.
}

Starting from this transformed system and its known global minimizer, we iteratively transform the system back to its original form, using the minimizer from the previous iteration as the starting point for the next iteration.
Algorithm \ref{alg-homo} is the homotopy algorithm for our problem. This algorithm is not guaranteed to converge to the global minimizer of the objective function. However, as shown in section \ref{sect-results}, it can find considerably better minimizers compared to some of the well-known global optimization solvers because it is designed specifically for this problem. 
\IncMargin{1em}
\begin{algorithm}[h!] \caption{\small{Homotopy algorithm for optimizing dispatch times}} \label{alg-homo}
\SetAlgoLined
\SetKwData{Left}{left}\SetKwData{This}{this}\SetKwData{Up}{up}
\SetKwInOut{Input}{Input}\SetKwInOut{Output}{Output}
\Input{ Real-time instance of the freight system with all its parameters }
\Output{ Optimal values for time variables $\boldsymbol{t}^V$ and the minimized cost}
 Find the starting point ${\boldsymbol{t}^V}^0$ using equations (\ref{eq-alg6})-(\ref{eq-alg8})\;
 Calculate $\boldsymbol{\delta}$ using equation (\ref{eq-alg9})\;
 Choose the total number of iterations $N$ sufficiently large, e.g. 20\;
 \For{$i\leftarrow 1$ \KwTo $N$}{
   Calculate homotopic transformation of function $g(\boldsymbol{t}^V)$ using $g^k = g(\boldsymbol{t}^V + \frac{N-k}{N} \boldsymbol{\delta} )$\;
   Perform  gradient projection (or another gradient-based method) on ${\kappa}^k = g^k + h$ from starting point ${\boldsymbol{t}^V}^{(k-1)}$ to find a minimizer satisfying the $2^{nd}$ order optimality conditions and save the minimizer as ${\boldsymbol{t}^V}^k$\; }
  Return ${\boldsymbol{t}^V}^N$ as the optimal solution and $\kappa^{N}({\boldsymbol{t}^V}^N)$ as the minimized cost\;
\end{algorithm}

\section{Extension to bidirectional networks and mixed hierarchies} \label{sect-extention}

Our formulation and algorithm can also be used on networks with bidirectional flow and mixed hierarchies.

\begin{figure}[H]
\centering
\includegraphics[width=0.5\textwidth]{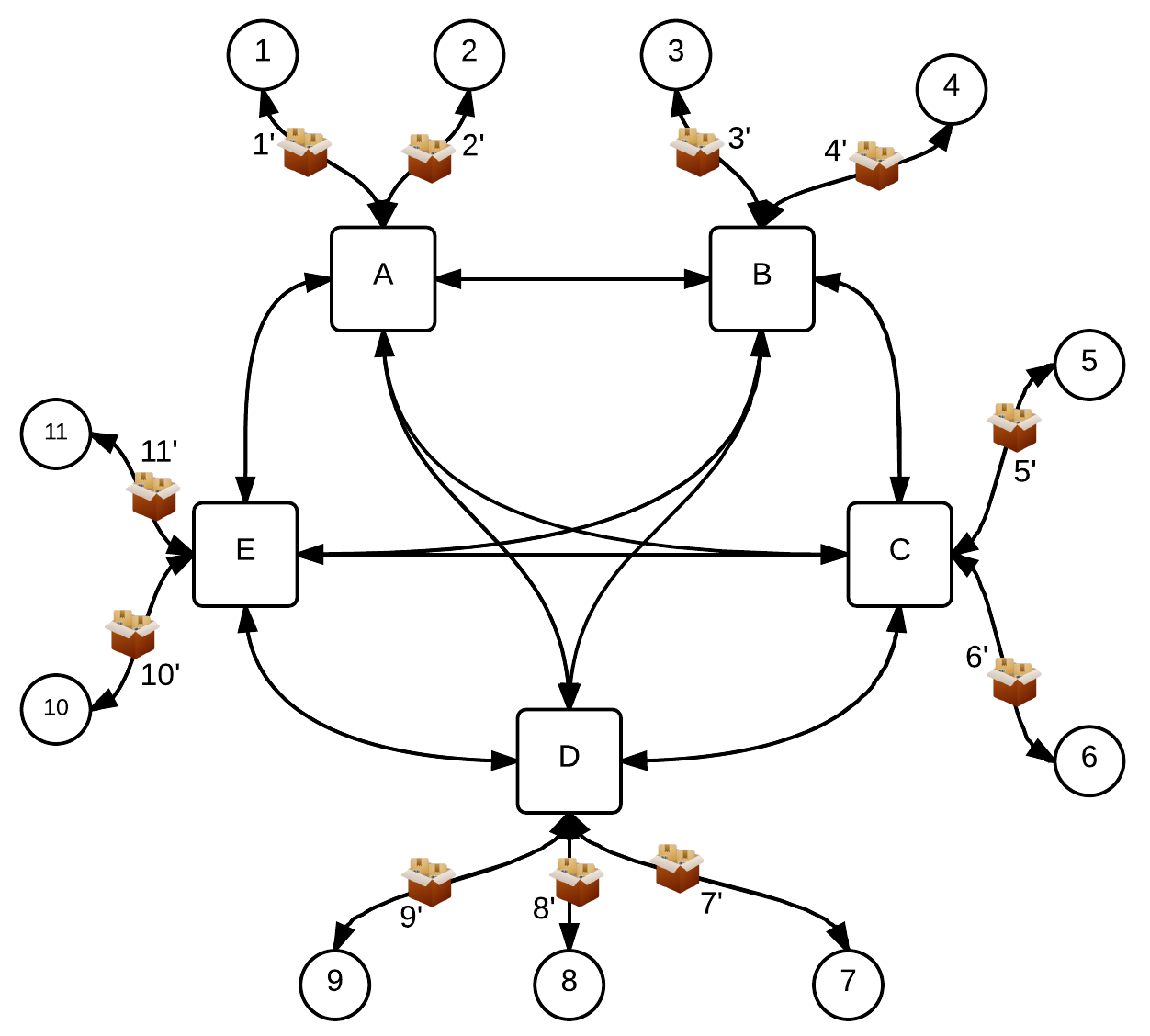}
\caption{A bidirectional freight system}
\label{fig:bi}
\end{figure}
\begin{figure}[H]
\centering
\includegraphics[width=0.55\textwidth]{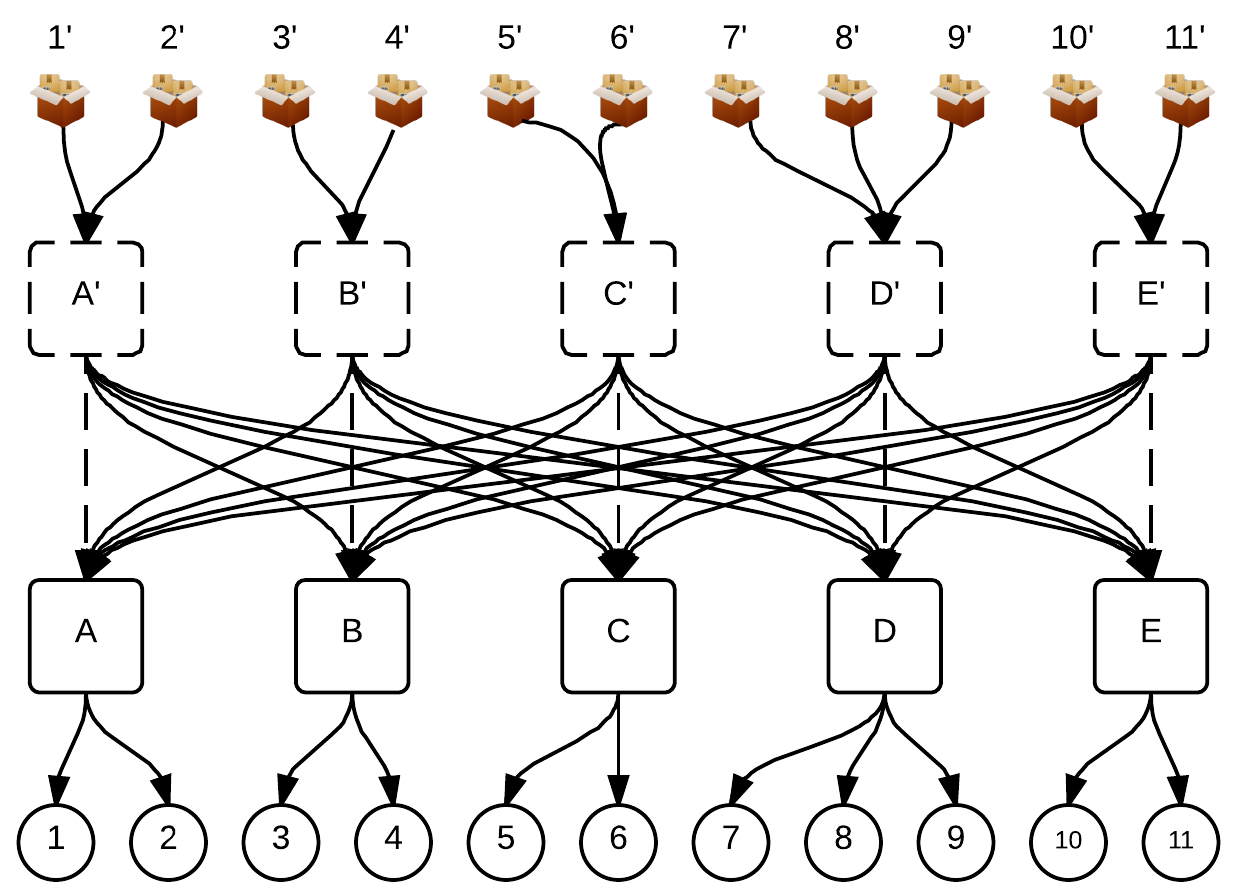}
\caption{Alternative representation of network in Figure \ref{fig:bi}}
\label{fig:bi-alt}
\end{figure}

For bidirectionality, consider the network in Figure \ref{fig:bi} as an example. The cargo moves in both directions on all routes and spokes and the maximum level of hierarchy is assumed to be 2 in any direction. This means that any piece of cargo passes through at most 2 hubs before reaching its destination. This network can be replaced by the unidirectional network in Figure \ref{fig:bi-alt} which  has the same dispatch variables as the original network, so Algorithm \ref{alg-homo} can be applied.

For mixed hierarchies of 2 and less, consider the example in Figure \ref{fig:mixed}. In this network, destinations $e$ and $m$ receive cargo  processed in two, one or zero hubs prior to delivery. This kind of network can be divided into two separate and independent networks, one containing hierarchies of 2 and one containing hierarchies of less than 2, as shown in Figures \ref{fig:mixedalt1} and \ref{fig:mixedalt2}.

\begin{figure}[H]
\centering
\includegraphics[width=0.55\textwidth]{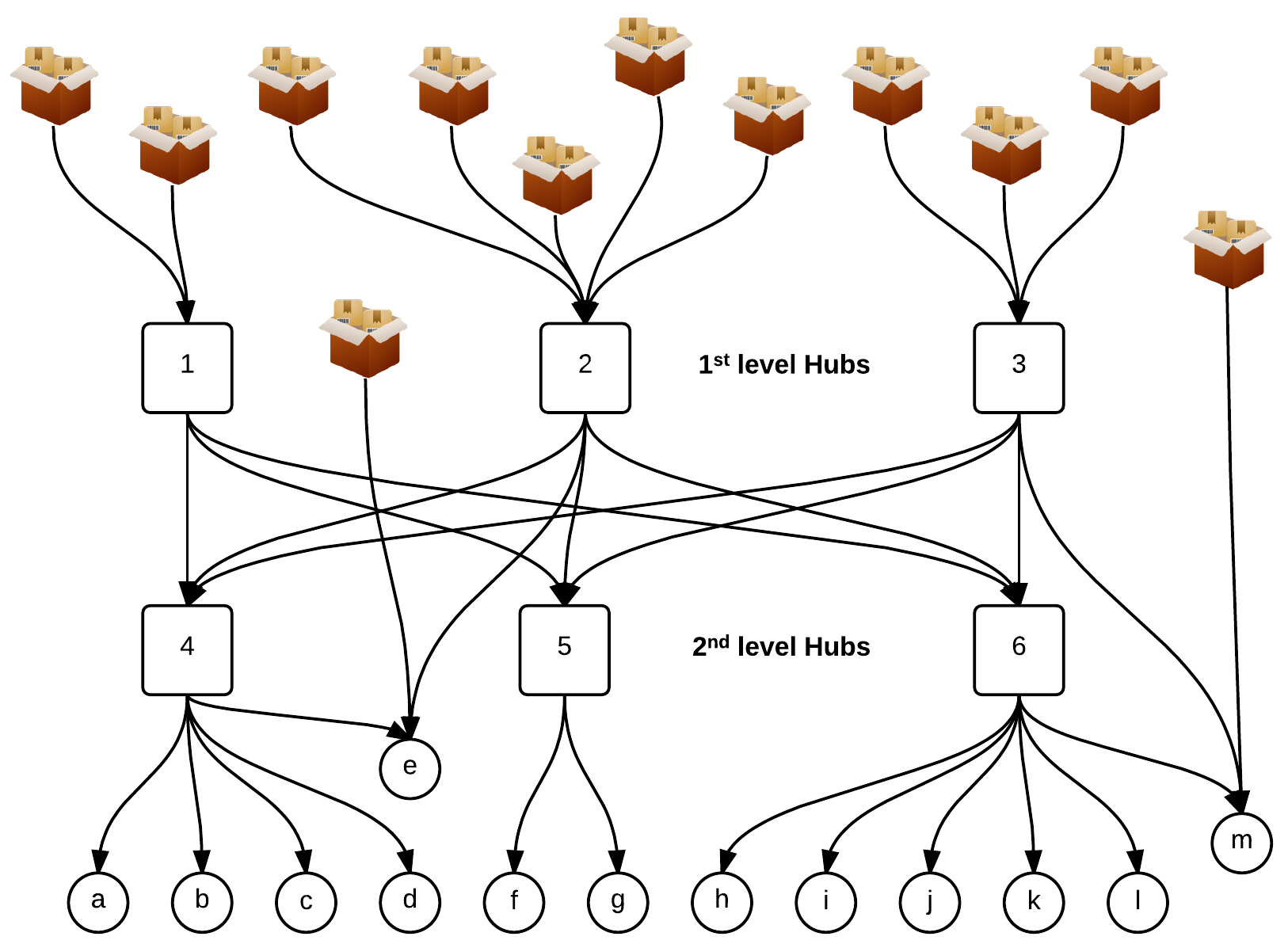}
\caption{A system with mixed hierarchies}
\label{fig:mixed}
\end{figure}

\begin{figure}[H]
\centering
\begin{subfigure}{0.5\textwidth}
\includegraphics[width=1\linewidth]{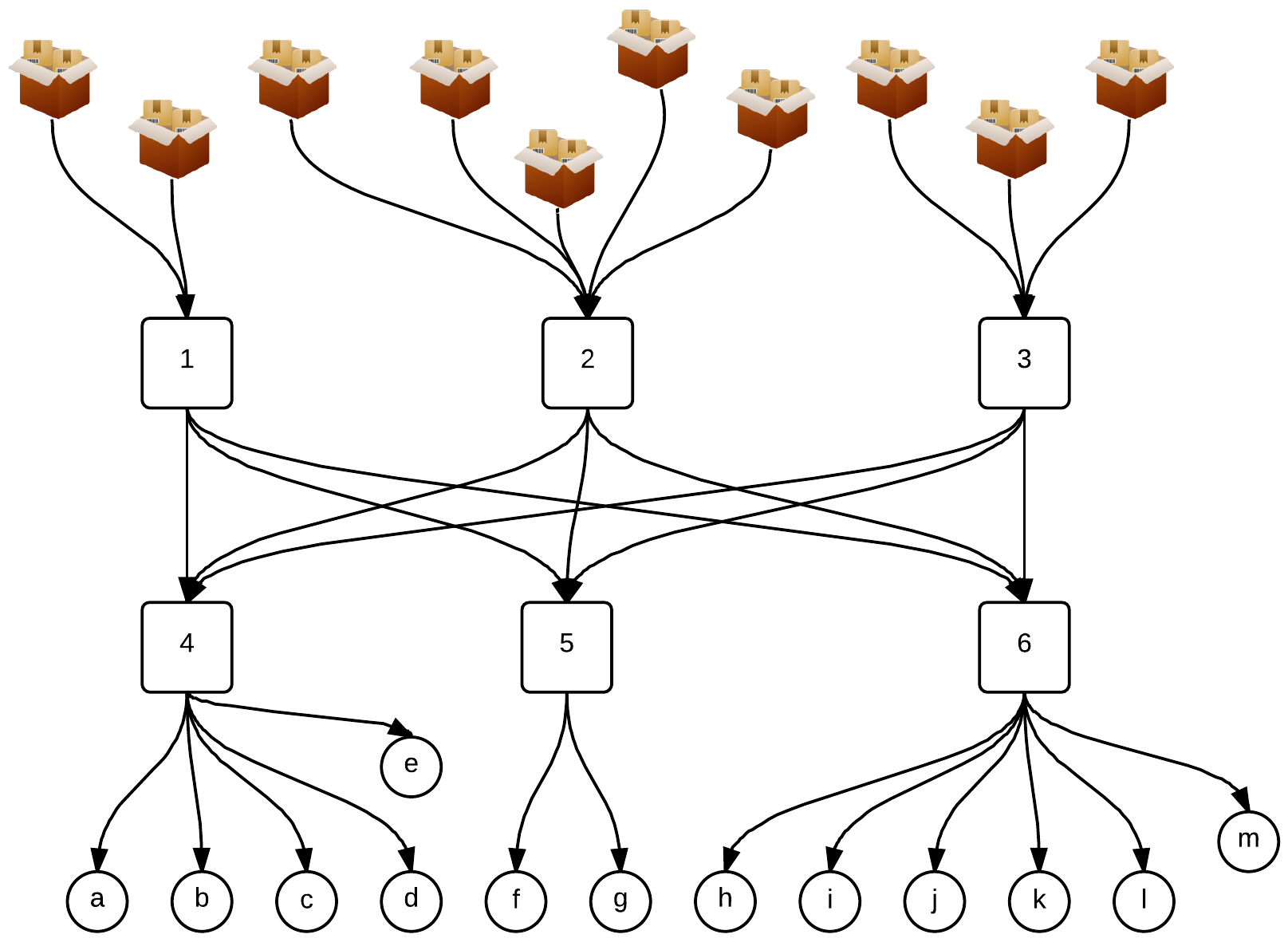}
\caption{hierarchies of 2}
\label{fig:mixedalt1}
\end{subfigure}
\begin{subfigure}{0.4\textwidth}
\includegraphics[width=1\linewidth]{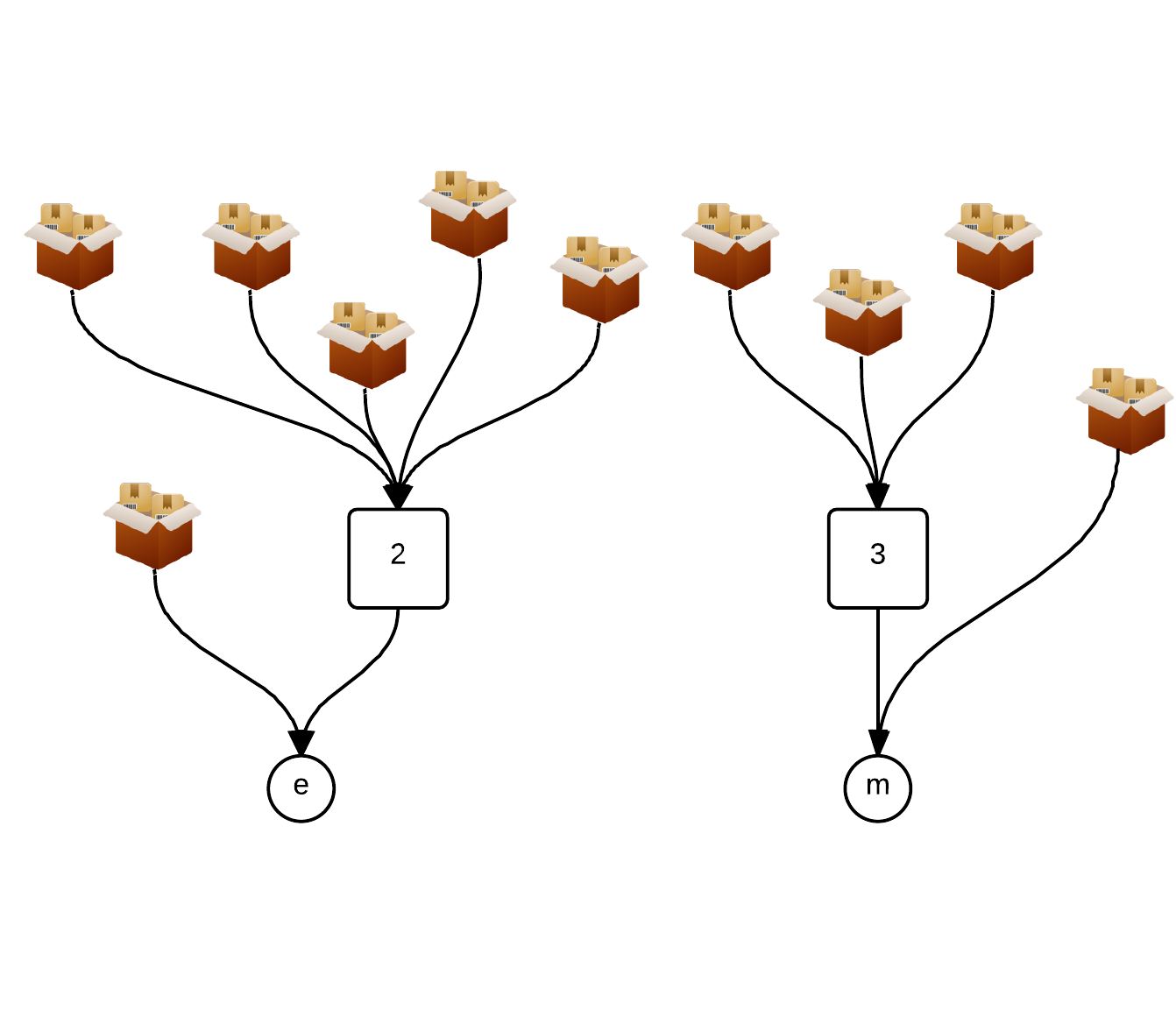}
\caption{hierarchies of less than 2}
\label{fig:mixedalt2}
\end{subfigure}
\caption{Decomposition of system in Figure \ref{fig:mixed}}
\label{fig:mixedalt}
\end{figure}

\section{Results} \label{sect-results}
We demonstrate our model framework and algorithm on three examples.

\subsection{ Case 1: A small system } 
Figure \ref{fig:case1} shows a network with only two variables, dispatch times for vehicles on level $J$ and level $K$, respectively. We investigate this problem to gain insight into the model and to demonstrate how the homotopy algorithm operates. The contours of the objective function are shown in Figure \ref{fig:case1contour}. Algorithm \ref{alg-homo} finds the global minimizer  $\boldsymbol{t}^V=[4.08 ,10.90]$, marked with a \textcolor{red}{red x},  corresponding to $\kappa=\$37,100$. There are several saddle points and local minimizers. The value of objective function for the second-best minimizer is $\kappa=\$41,960$.

Figure \ref{fig:case1homotopy} shows the  shape of the objective function in each of the $N=6$ iterations of the algorithm. Each \textcolor{red}{red x} on the sub-graphs is the minimizer ${\boldsymbol{t}^V}^k$ found by Algorithm \ref{alg-homo}. The final solution is the global minimizer of the original problem.

When provided with a good starting point and adequate time, most solvers can deliver the global optimizer for small problems like this one.

\begin{figure}[H]
\centering
\includegraphics[width=0.6\textwidth]{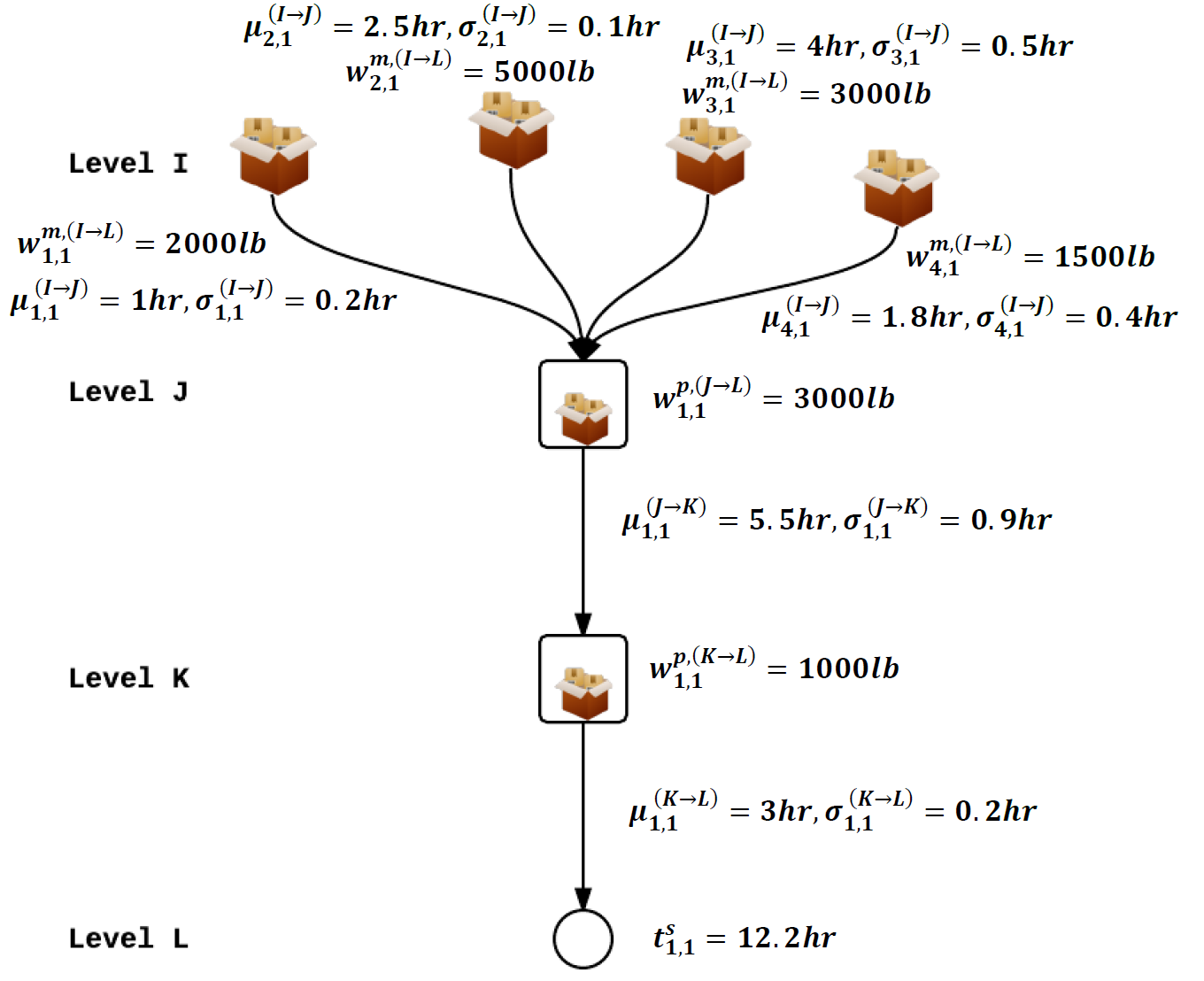}
\caption{Network in Case 1} 
\label{fig:case1}
\end{figure}

\begin{figure}[H]
\centering
\includegraphics[width=0.6\textwidth]{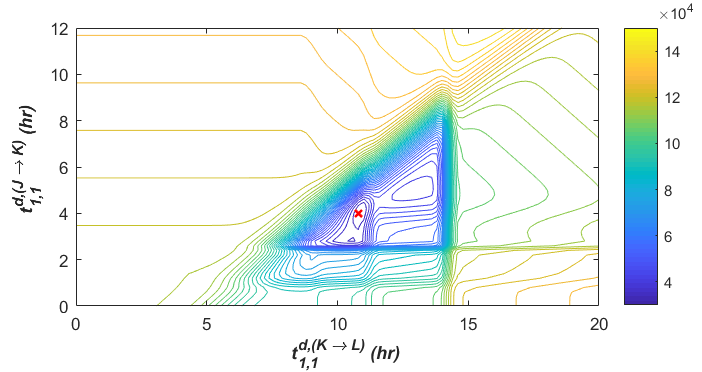}
\caption{Contour plot of objective function ($\kappa$ in $\$$) with respect to two decision variables} 
\label{fig:case1contour}
\end{figure}

\begin{figure}[H]
\setlength{\abovecaptionskip}{3pt}
\setlength{\belowcaptionskip}{5pt}
\centering
\begin{subfigure}[t]{0.4\textwidth}
\includegraphics[width=\textwidth, height=3.5cm]{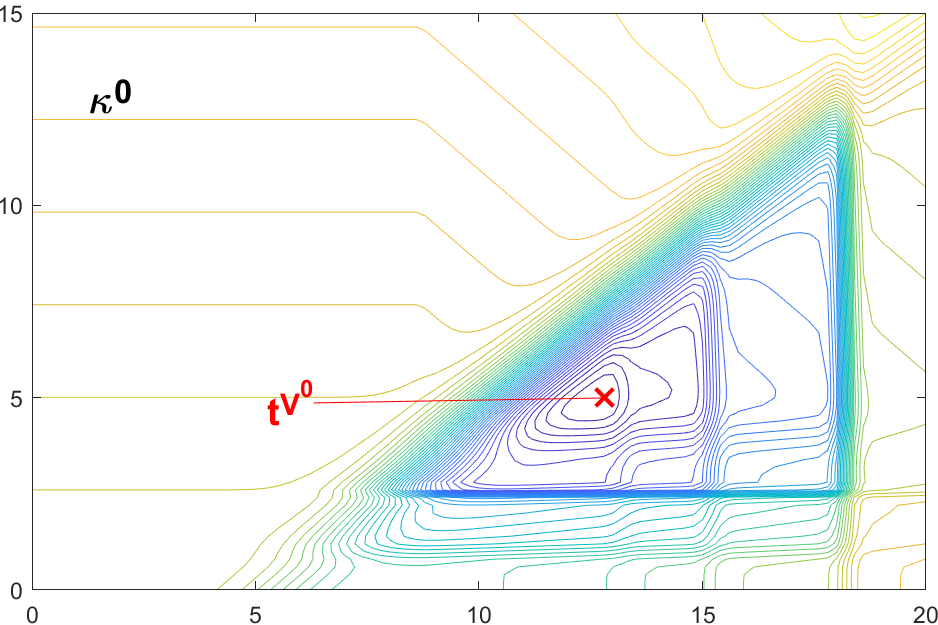}
\caption{}
\end{subfigure}
\begin{subfigure}[t]{0.4\textwidth}
\includegraphics[width=1\linewidth, height=3.5cm]{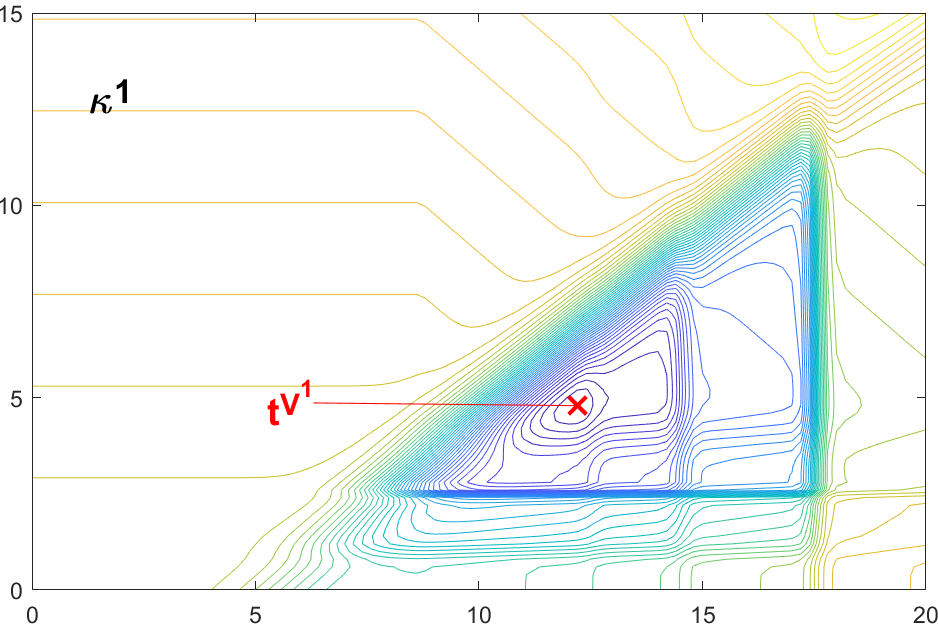}
\caption{}
\end{subfigure}
\begin{subfigure}[b]{0.4\textwidth}
\includegraphics[width=1\linewidth, height=3.5cm]{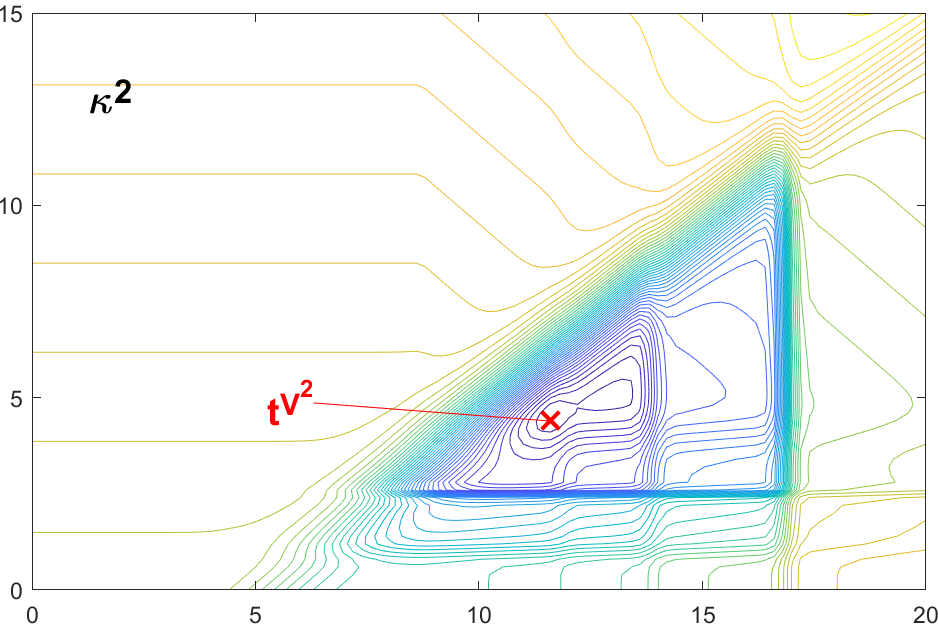}
\caption{}
\end{subfigure}
\begin{subfigure}[b]{0.4\textwidth}
\includegraphics[width=1\linewidth, height=3.5cm]{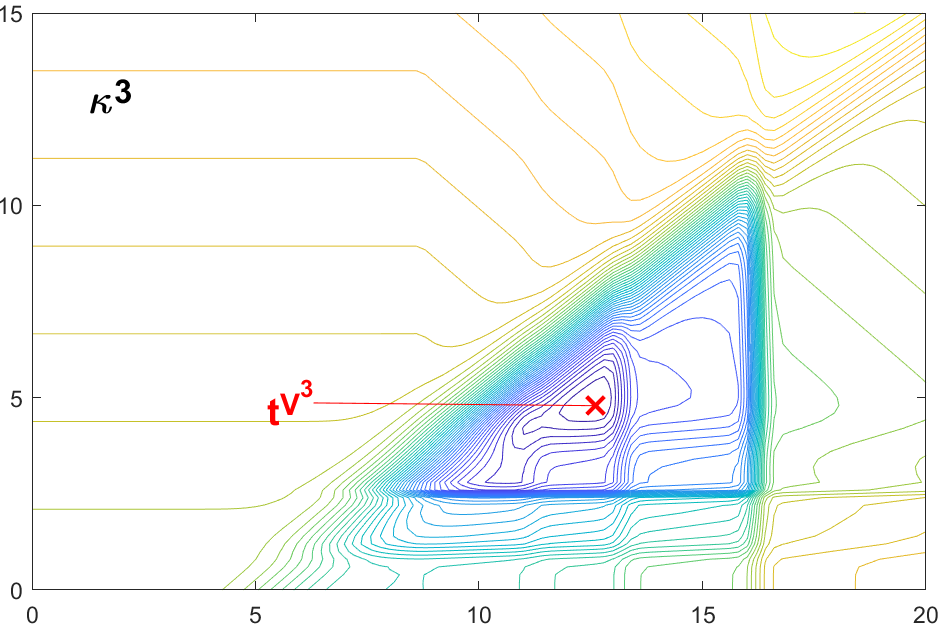} 
\caption{}
\end{subfigure}
\begin{subfigure}[b]{0.4\textwidth}
\includegraphics[width=1\linewidth, height=3.5cm]{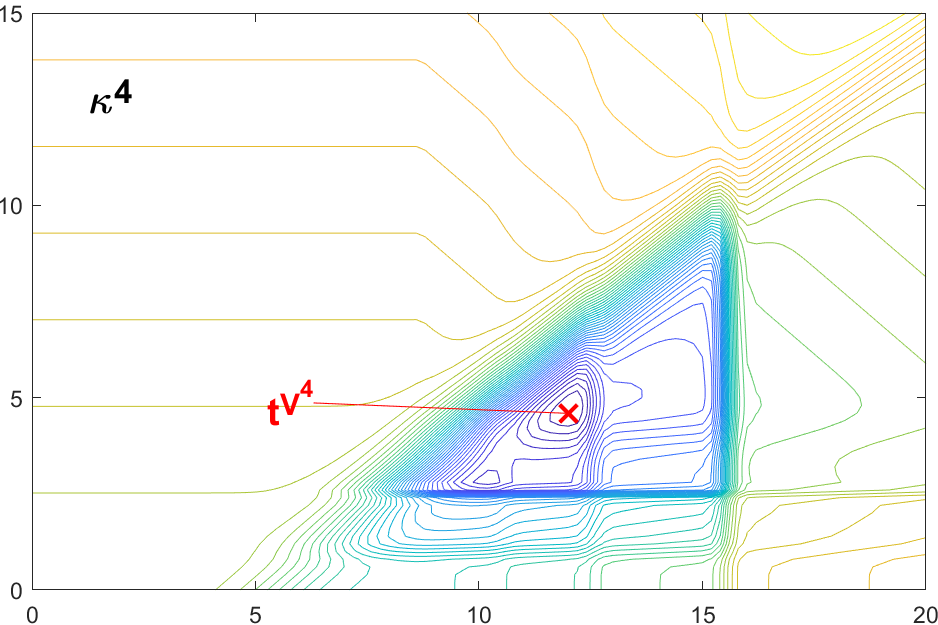}
\caption{}
\end{subfigure}
\begin{subfigure}[b]{0.4\textwidth}
\includegraphics[width=1\linewidth, height=3.5cm]{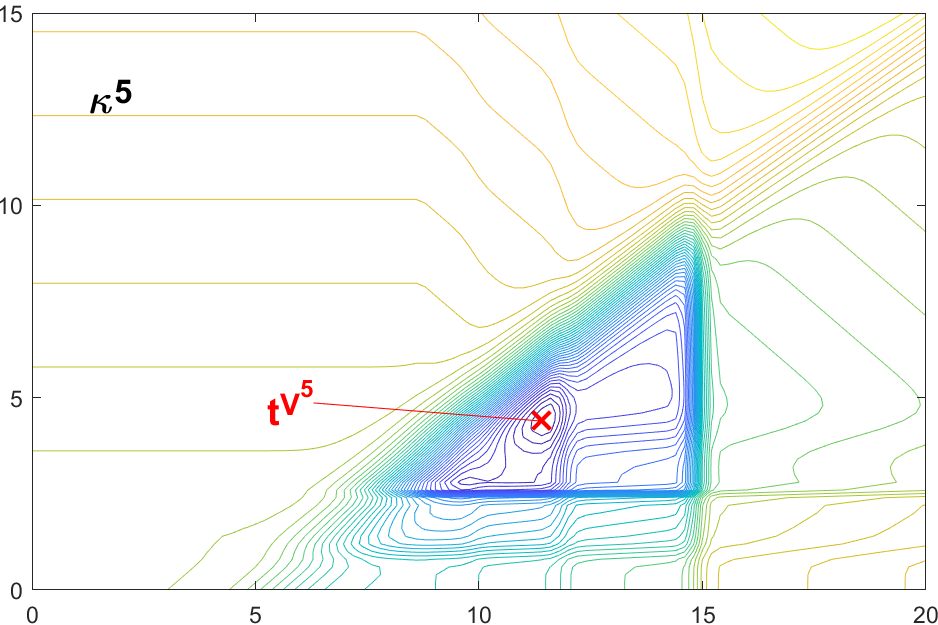}
\caption{}
\end{subfigure}
\caption{Transformation of system in Case 1 using homotopy Algorithm \ref{alg-homo}}
\label{fig:case1homotopy}
\end{figure}

\subsection{ Case 2: A midsize system} 
Case 2 is the network in Figure \ref{fig:network}, with 3 hubs on the first level and 3 hubs on the second level. There are 10 variables, and the resulting minimized cost is $\$54,793$.
We compare our algorithm with NLopt \citep{nlopt}. NLopt is a free/open-source library for nonlinear optimization \achange{which includes several prominent global and local optimization algorithms, such as \cchange{the} DIRECT algorithm \citep{jones1993lipschitzian} and its \cchange{variant} \citep{gablonsky2001locally}, Controlled Random Search algorithms \citep{kaelo2006some}, Multi-Level Single-Linkage algorithm \citep{rinnooy1987stochastic}, Improved Stochastic Ranking Evolution Strategy \citep{runarsson2005search}, Method of Moving Asymptotes \citep{svanberg2002class}, Low-storage BFGS \citep{nocedal1980updating,liu1989limited}, Preconditioned truncated Newton method \citep{dembo1983truncated}, Augmented Lagrangian algorithm \citep{conn1991globally,birgin2008improving}, and others. When using the NLopt, we first use each of their global optimization algorithms separately, and then try to improve each solution by a local solver as suggested by the authors.

We report in Table \ref{table1} the best result that we obtain using all their algorithms.} We keep the number of function evaluations equal for all solvers. Algorithm \ref{alg-homo} is much more effective than NLopt for this problem.
\begin{table}[H]
\centering
\caption{Performance of homotopy algorithm for Case 2}
\begin{tabular}{ | p{4.7cm} | p{2.2cm}| p{3cm} |} 
 \hline
 Solver & Minimized $\kappa (\$)$ & {Number of function evaluations} \\ 
 \hline
 Algorithm 1 & $54,793$ & 319,693 \\ 
 \hline
 Best result obtained by NLopt & $62,979$ & 319,767 \\ 
 \hline
\end{tabular}
\label{table1}
\end{table}


\subsection{ Case 3: Large systems}
We study the performance of Algorithm \ref{alg-homo} with respect to network size, $(n_I, n_J, n_K, n_L)$. Networks of size $(5\omega, \omega, \omega, 5\omega)$ involve $\omega^2 + 5 \omega$ variables.
We define $\eta$ to be the ratio of the best objective function value obtained from Nlopt  to that obtained by our homotopy algorithm, so $\eta > 1$ means that our algorithm delivers a better result.
For each $\omega$ ($\omega = 2,5,10,15,20,25,30,35,40$), we construct 50 problems with random data and report the max, min, and average value of $\eta$. The number of function evaluations is kept equal among all solvers for cases with same $\omega$. Figure \ref{fig:case3} shows the results. We see that Algorithm \ref{alg-homo} outperforms NLopt on all but the smallest problems, and the margin by which it is better increases with the number of variables.

\achange{To elaborate further on the advantage of our homotopy algorithm, consider the case where $\eta =1.1$. In such case, we have decreased the total cost in the system by 10\%, by finding a better minimizer of our cost function, using the homotopy algorithm.}

\achange{The model with the largest number of variables is solved in less than 5 minutes on a 2017 Macbook. Using a more powerful computer, or a distributed system, may deliver the solution in real-time, even for larger systems. }

\begin{figure}[H]
\centering
\includegraphics[width=0.7\textwidth]{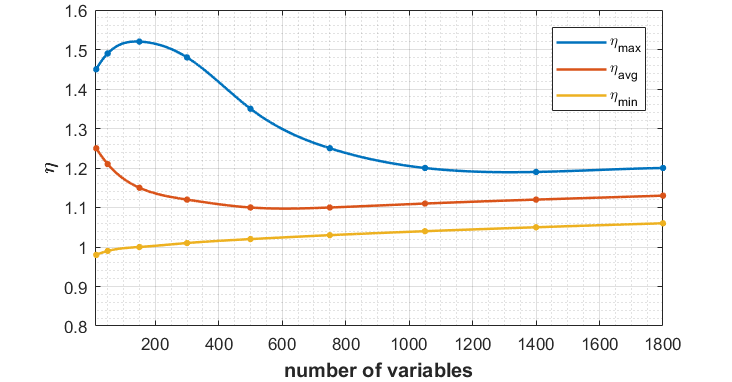}
\caption{Algorithm performance with respect to size of network} 
\label{fig:case3}
\end{figure}


\section{Conclusion} \label{sect-conclusion}
In this paper, we proposed a probabilistic model and a homotopy algorithm for optimizing real-time dispatch decisions in freight transportation systems.
\begin{enumerate}
    \item Our model is probabilistic and hierarchical with two levels of hubs, which has not been previously studied for freight systems. The comprehensive cost function is based on the expected cost of real-time dispatch decisions.
     \item We \cchange{are able to allow a nonlinear penalty for late delivery of cargo, not previously considered in the literature for hierarchical networks.}
    \item Our new homotopy algorithm (empirically) finds lower cost solutions than standard optimization algorithms on this problem by taking advantage of its structure. \cchange{We believe that although customized  nonlinear optimization algorithms are not common in the freight transportation literature, they have great potential, especially in real-time applications.}
         \item \cchange{Our} algorithm is computationally efficient and parallelizable.
\end{enumerate}

\section*{Acknowledgements}
    The authors thank Thomas Goldstein for helpful discussions.

\section*{References}

\bibliography{references}

\end{document}